  \def\sol{\accentset{{\cc@style\overline{\mskip10mu}}}} 
  \def\sul{\underaccent{{\cc@style\underline{\mskip10mu}}}}
\theoremstyle{definition}
\newtheorem{definition}{Definition}%[section]
\theoremstyle{plain}
\newtheorem{theorem}[definition]{Theorem}
\newtheorem{proposition}[definition]{Proposition}
\theoremstyle{remark}
\newtheorem{remark}[definition]{Remark}
\newtheorem{example}[definition]{Example}
\DeclareMathOperator\USC{USC}
\DeclareMathOperator\tr{tr}
\def\stm{\setminus}
\newcommand{\R}{\mathbb{R}}
\newcommand{\N}{\mathbb{N}}
\def\cS{\mathcal{S}}
\def\cM{\mathcal{M}}
\def\bye{\end{document}}
\def\by{\end{proof}\bye}
\def\fr{\frac} 
\def\disp{\displaystyle}  
\def\ga{\alpha}     
\def\gep{\varepsilon}      
\def\ep{\gep}    
\def\mid{\,:\,}
\def\gd{\delta}
\def\gz{\zeta}
\def\gl{\lambda}
\def\gL{\Lambda}
\def\tim{\times}
\def\ol{\overline}
\def\pl{\partial}
\newcommand{\Mpl}{\mathcal{M}^+_{\lambda, \Lambda}}
\def\bcases{\begin{cases}}
\def\ecases{\end{cases}}
\def\beq{\begin{equation}}
\def\eeq{\end{equation}}
\def\balns{\begin{align*}}
\def\ealns{\end{align*}}
\def\bald{\begin{aligned}}
\def\eald{\end{aligned}}
\def\bred{\begin{color}{red}} \def\ered{\end{color}}
\def\gO{\Omega} 
\def\1{\mathbf{1}}
\def\IN{\text{ in }}\def\IF{\text{ if }} \def\FOR{\text{ for }} 
\def\AND{\text{ and }}
\def\ds{\rightarrow}
\def\dis{\displaystyle}
\title{Test function approach to fully nonlinear equations in thin domains}
\author[I. Birindelli]{Isabeau Birindelli}
\address[I. Birindelli]{Dipartimento di Matematica Guido Castelnuovo, Sapienza 
Universit\`a di Roma, Piazzale Aldo Moro 5, Roma, Italy.}
\email{isabeau@mat.uniroma1.it}
\author[A. Briani]{Ariela Briani}
\address[A. Briani]{Institut Denis Poisson, 
Universit\'e de Tours, France.}
\email{ariela.briani@univ-tours.fr}
\author[H. Ishii]{Hitoshi Ishii%$^*$
}
\address[H. Ishii]{Institute for Mathematics and Computer Science, Tsuda  University,
 2-1-1 Tsuda, Kodaira, Tokyo 187-8577 Japan.}
\email{hitoshi.ishii@waseda.jp}
\keywords{asymptotic behavior of solutions, thin domains}
\thanks{A. Briani was
partially supported by l’Agence Nationale de la Recherche (ANR), project
ANR-22-CE40-0010 COSS; I. Birindelli was partially supported by project Leoni 2023 GNAMPA-INDAM and project  "At the Edge of Reaction-diffusion equations" Sapienza Università di Roma. H. Ishii was partially supported by the JSPS KAKENHI Grant Nos. JP20K03688, JP20H01817 and JP21H00717. The project was very much advanced while I. Birindelli was visiting prof. Ishii in Tsuda University, she wished to thank the Institution for the invitation.}
\subjclass[2020]{
35B40, %Asymptotic behavior of solutions,
35D40, %Viscosity solutions 
%35F50, %Nonlinear first-order systems
35J25  	%Boundary value problems for second-order elliptic equations
49L25 %Viscosity solutions
}
\begin{document} 

%\centerline{\large\sc A test function approach  for fully nonlinear equations in thin domains.}\bigskip \medskip

\def\ON{\text{ on }}

\begin{abstract}
In this note we extend to fully nonlinear operators the well known result on thin domains of  Hale and Raugel \cite{HR}. The result is more general even in the case of the Laplacian.
\end{abstract}

\maketitle 

\tableofcontents
\allowdisplaybreaks

\section{Introduction} 
The classical result of Hale and Raugel \cite{HR} in thin domains states that if $u_\ep$ are solutions of
$$\left\{\begin{array}{lc}
-\Delta u_\ep+u_\ep=f(x,y) &\mbox{ in }\ \Omega_\ep\\
 \dis \frac{\partial u_\ep} { \partial \nu_\ep} =0 &\mbox{ on }\ \partial\Omega_\ep
\end{array}
\right.
$$
where $
\gO_\ep=\{(x,y)\in\R^N\times\R \mid x\in\Omega, \ 0<y<\ep g(x)\},
$ for some $g\in C^{3}(\overline{\Omega})$ such that ${\disp 0<\inf_{\Omega} g\leq \sup_{\Omega}g<\infty}$
 then $u_\ep$ converges to $u_o$ solution of
$$\left\{\begin{array}{lc}
-(\Delta u_o+\frac{D g\cdot Du_o}{g})+u_o=f(x,0) &\mbox{ in }\ \Omega\\
\dis \frac{\partial u_0}{\partial  \nu} =0 &\mbox{ on }\ \partial\Omega.
\end{array}
\right.
$$
This result has been extended in a wide variety of related problems see e.g. the works of Arrieta, Pereira, Raugel \cite{AP, ANP,R}.
But all the above results concern variational problems, where the appearance of the first order seems to come from a typical integration by part, related to the variational nature of the problem. 

In this paper, instead, we treat fully nonlinear equation in thin domains i.e.
where the equation is given by
$$F(D^2u,D u, u,(x,y))=0 \ \mbox{in }\Omega_\ep$$
where $F: \cS(N+1) \times \R^{N+1} \times \R \times \gO_\ep \ds \R$ is a proper functional in the sense of the User's guide \cite{CIL}.
Of course the solutions are viscosity solutions and the proof follows the test function approach of Evans \cite{Ev} which is somehow more direct and completely different from the papers mentioned above. Furthermore the technique does not require the operator to be uniformly elliptic as it will be evident from the hypotheses below. An example of thin domains for degenerate elliptic operator will be given explicitly below.  

Even though the results will be proved for a large class of operators, in this introduction, we will illustrate the special case 
where the fully non linear operator is one of the extremal Pucci operators e.g. for $0<\lambda\leq \Lambda$
$$\Mpl(D^2u):=\sup_{\lambda I\leq A\leq \Lambda I}(\tr A(D^2u)):=\lambda\sum_{e_i\leq 0} e_i +\Lambda\sum_{e_i\geq 0} e_i, $$
where $e_i=e_i(D^2u)$ denotes the $i$-th eigenvalue of the Hessian matrix $D^2u$. 

Under the hypothesis
\begin{enumerate}\item[(H1)] 
$g\in C^1(\overline\Omega)$ \ \  and \ \ 
${\disp 0<\inf_{\Omega} g\leq \sup_{\Omega}g<\infty,}$
\end{enumerate}
we will prove that  $u_\ep$, the solutions of
\beq\label{I1}\left\{\begin{array}{lc}
-\Mpl (D^2 u_\ep)+u_\ep=f(x,y) &\mbox{ in }\ \Omega_\ep\\
\dis \frac{\partial u_\ep}{\partial \nu_\ep} =0 &\mbox{ on }\ \partial\Omega_\ep
\end{array}
\right.
\eeq
converge uniformly to $u_o$ solution of
\beq\label{rem1.2}
\left\{\begin{array}{lc}
-\Mpl(D^2u_o(x))-\Lambda \left(\frac{D g\cdot Du_o}{g}\right)^++\lambda \left(\frac{D g\cdot Du_o}{g}\right)^-+ u_o(x)=f(x,0)
 & \IN \Omega\\
\dis \frac{ \partial u_o}{\partial \nu} =0 &\mbox{ on }\ \partial\Omega.
 \end{array}
 \right.
 \eeq
Of course in the first equation $\Mpl$ acts on matrices in $\cS(N+1)$ while, in the second equation, it acts on matrices in $\cS(N)$.

In the special case $\lambda=\Lambda=1$, when $\Mpl=\Delta$, we recover Hale and Raugel result, but we improve the condition on $g$ that is only required to be the natural condition $C^1$ and not $C^3$.

We wish to explain the {\bf heuristic} behind the formal proof which will be given in this paper, for a much larger class of operators. Let $u_\ep$ be a solution of \eqref{I1} and let 
$$v_\ep (x,y):=u_\ep(x, \ep g(x)y)$$ 
so that we have "flattened" the top boundary.
In similarity with the linear variational case we can suppose that there exists a constant $C$ such that
$$|\partial_{yy}v_\ep|\leq C\ep^2.$$
This in turn implies that for $\ep\rightarrow 0$, $\partial_{yy}v_\ep\rightarrow 0$ and then, using the boundary condition, we get
$$v_\ep(x,y)\rightarrow v_o(x).$$
On the other hand, the above estimates implies also that, for some function $k(x)$, we get that
$$\frac{\partial_{yy}v_\ep(x,y)}{\ep^2}\rightarrow k(x):=g^2(x)h(x).$$
So we may use the following \emph{ansatz}:
$$v_\ep(x,y)=w(x)+\ep^2k(x)\frac{y^2}{2} +o(\ep^2).$$

Substituting the ansatz in the equation, we let, formally $\ep$ go to zero, after a tedious but simple computation it is easy to see that we obtain
$$-\Mpl(\left(\begin{array}{cc}D^2w(x) & 0 \\
0 & h(x)\end{array}\right))+w(x) =f(x,0)$$

We use the condition on the "top" boundary, in order to determine $h(x)$:

$$Dg(x)\cdot[Dw(x)+\ep^2D(g^2 h)(x)\frac{1}{2}]=g(x)h(x)[1+\ep^2 Dg(x)].$$
Passing to the limit we find
$$h(x)=\frac{Dg(x)\cdot Dw(x)}{g}$$
i.e. the limit equation becomes:

$$-\Mpl(\left(\begin{array}{cc}D^2w(x) & 0 \\
0 & \frac{D g(x)\cdot D w(x)}{g(x)}\end{array}\right))+w(x) =f(x,0).$$
In the rest of the paper we will treat the general case and make rigorous the above idea.

We will first give some a priori bounds, which allow to prove that the upper and lower relaxed limits $u^+$ and $u^-$ of $\{u^\ep\}_{\ep\in(0,\ep_0]}$ are respectively sub and super solutions of the limit equation:
\beq %\tag{2} 
\label{ieq2-2}\left\{\ \bald
&F\left(\begin{pmatrix}D^2 w&0 \\ 0& D g\cdot Dw/g \end{pmatrix}, (D w,0),w,(x,0)\right)=0 %f(x,0)
\ \ \IN \Omega, %\ \ \AND \ \ 
\\&\fr {\pl w}{\pl \nu}=0 \ \ \ON \ \pl\gO.
\eald \right.
\eeq
Under the further condition that the comparison principle holds, we will prove that the convergence  of $u^\ep$ to the solution of \eqref{ieq2-2} is uniform.

The scope of this paper is to open the results in thin domains from a perspective that is, to our knowledge completely different from the previous results, see e.g. \cite{AP, ANP, HR, R}. So, in order to make the exposition the clearer possible, we have decided to concentrate on the more classical case i.e. domains like $\Omega_\ep$ that in one direction have a one flat boundary and with a Neumann boundary condition. We plan to investigate in a further research, more generale domains, for examples with jumps or with non flat sides, or thin domains that also have an oscillatory boundary.
We hope the reader will appreciate this choice.

\section{Preliminaries}
Let $F: \cS(N+1) \times \R^{N+1} \times \R \times \gO_\ep \ds \R$ be a proper functional in the sense of the User's guide \cite{CIL}, i.e. 
%\begin{enumerate} \item[]\begin{minipage}{0.8\textwidth} 
\\
\hspace{5pt} \begin{minipage}{0.92\textwidth}
\[\tag{H2}\left\{\bald
& F\in C(\cS(N+1) \times \R^{N+1} \times \R \times \gO_\ep,\R),  
\\& F(X,p,r,(x,y)) \leq F(Y,p,s,(x,y)) \mbox{ whenever } r \leq s , \mbox{ and } Y \leq X. 
\eald\right.
\] 
%\end{minipage}
%\end{enumerate} 
\end{minipage} \medskip 

Furthermore, for simplicity of the presentation, we strengthen the monotonicity 
condition on $F$ in the above as follows. 
\begin{enumerate}
\item[(H3)]There exists $\alpha>0$ such that 
\[\alpha(r-s)\leq F(X,p,r,(x,y))-F(X,p,s,(x,y))\]
for $r\geq s$ and $(X,p,(x,y))\in \cS(N+1)\times \R^{N+1}\times \gO_{\ep}.$
\end{enumerate}

Our PDE problem is:
\beq \label{eq1}
F(D^2 u^\ep, D u^\ep, u^\ep, (x,y))= 0 %f(x,y)
\ \ \IN \gO_{\ep} \ \ \AND \ \ 
\fr {\pl u^\ep}{\pl %\nu
\nu_\ep}=0 \ \ \ON \pl\gO_\ep,
\eeq
where $%\nu=
\nu_\ep$ denotes the outward (unit) normal to $\gO_\ep$. 

Since our concern is the asymptotic behavior of solutions $u^\ep$ to \eqref{eq1}, 
we will restrict ourself to the parameter $\ep$ in the range $(0,\ep_0]$, where 
$\ep_0>0$ is a number fixed throughout (see the comment after Proposition \ref{prop1}).  
%To be precise, we assume in addition to (H1) and (H2) that 
%\begin{enumerate} \item[(H3)]
%$F\in C(\cS(N+1)\tim\R^{N+1}\tim\R\tim\ol{\gO_{\ep_0}}). $   
%\end{enumerate}
Let us note that we shall use $\nu$ to indicate the normal to $\gO$ and $\nu_\ep$ for $\gO_\ep$. 
%To the best of the authors' knowledge, 
Obviously, the assumptions above are not enough 
to ensure the existence of viscosity solutions in the sense of the User's guide \cite{CIL}  to \eqref{eq1}.  To keep the generality of the assumptions made above, we consider the notion 
of viscosity solutions to \eqref{eq1} which eliminates the continuity requirement. 
That is, we call a bounded function $u$ on $\ol{\gO_\ep}$ a (viscosity) solution 
of \eqref{eq1} if its upper and lower semicontinuous envelopes are 
viscosity sub and super solutions, in the sense of \cite{CIL}, to \eqref{eq1}, respectively. 

We assume throughout that 
\begin{enumerate} \item[(H4)] $\gO$ is a bounded $C^1$ domain of $\R^N$. 
\end{enumerate}
Accordingly, we may choose a function $\rho\in C^1(\R^N)$ so that 
\beq\label{rho}
\rho(x)<0  \ \ \FOR x\in\gO, \ \ D\rho(x)\not=0, \ \ \AND \ \ \rho(x)>0 \ \ \FOR x\in\R^N\stm \ol \gO.  
\eeq
Note that the outward unit normal $\nu$ to $\gO$ at $x\in \pl\gO$ is given by $\nu=|D\rho(x)|^{-1}D\rho(x)$. 
The domain $\gO_\ep$ has corners, where the $N$--dimensional hypersurface 
$\pl \gO\tim \R$ intersects either the hypersurfaces $y=g(x)$ or  $y=0$, respectively.  \\
We denote by $\pl_\mathrm{L}\gO_\ep$, 
$\pl_\mathrm{B}\gO_\ep$, and $\pl_\mathrm{T}\gO_\ep$ the lateral, bottom, and top portions of the boundary $\pl\gO_\ep$, which are described respectively as 
\[\bald
&\{(x,y)\in\pl\gO_\ep\mid x\in\pl\gO\}, \quad  \{(x,y)\in\pl\gO_\ep\mid y=0\}, \ \ 
\AND 
\\& \{(x,y)\in\pl\gO_\ep\mid y=\ep g(x)\}.
\eald
\] 

%On the boundary point $x$ of $\gO$, the outward unit normal is given by $\rho(x)^{-1}D\rho(x)$, while the 
Furthermore, the outward unit normal to $\gO_\ep$ at the lateral boundary, 
at the bottom, $y=0$, and at the top boundary, $y=\ep g(x)$,  
is given, respectively, by $\nu_\mathrm {L}=(|D\rho(x)|^{-1}D\rho(x),0)$, $\nu_\mathrm{B}=-e_{N+1}=-(0,\ldots,0,1)$, and
\[
\nu_\mathrm{T}=\fr{(-\ep Dg(x),1)}{\sqrt{1+\ep^2|Dg(x)|^2}}. 
\]
%Because of the need to distinguish which portion (lateral, bottom, or top) of the boundary, we write  $\nu_\mathrm {L}=(|D\rho|^{-1}D\rho,0)$, $\nu_\mathrm{B}=-e_{N+1}$, and %
%\beq \label{nuT}
%\nu_\mathrm{T}=\fr{(-\ep Dg(x),1)}{\sqrt{1+\ep^2|Dg(x)|^2}}. 
%\eeq

The appearance of corners of the domain $\gO_\ep$ requires a little care in the definition of sub and super solutions to \eqref{eq1}. For instance, when $u$ is a bounded upper semicontinuous function on $\ol{\gO_\ep}$, we call $u$ a viscosity subsolution of 
\eqref{eq1} if the following condition holds: whenever $\phi\in C^2(\ol{\gO_\ep})$, $\hat z=(\hat x,\hat y)\in\ol{\gO_\ep}$ and 
$\max_{\ol{\gO_\ep}}(u-\phi)=(u-\phi)(\hat z)$, we have 
\beq\label{sub1}
F(D^2\phi(\hat z),D\phi(\hat z),u(\hat z),\hat z)\leq 0
\eeq
if $\hat z\in\gO_\ep$; we have either \eqref{sub1} or 
\beq \label{subL}
\nu_\mathrm{L}\cdot D\phi(\hat z)\leq 0
\eeq
if $\hat z\in\pl_\mathrm{L}\gO_{\ep}\stm(\pl_\mathrm{B}\gO_\ep \cup\pl_\mathrm{T}\gO_\ep)$;
we have either \eqref{sub1} or 
\beq \label{subB}
\nu_\mathrm{B}\cdot D\phi(\hat z)\leq 0 
\eeq if  $\hat z\in\pl_\mathrm{B}\gO_{\ep}\stm\pl_\mathrm{L}\gO_\ep$;
we have either \eqref{sub1} or
\beq
\label{subT}
\nu_\mathrm{T}\cdot D\phi(\hat z)\leq 0
\eeq
if $\hat z\in\pl_\mathrm{T}\gO_\ep\stm\pl_\mathrm{L}\gO_\ep$;
we have either \eqref{sub1}, \eqref{subL}, or \eqref{subB} if $\hat z\in\pl_\mathrm{L}\gO_\ep\cap\pl_\mathrm{B}\gO_\ep$. Finally we have either \eqref{sub1}, \eqref{subL}, or 
 \eqref{subT} if $\hat z\in \pl_\mathrm{L}\gO_\ep\cap\pl_\mathrm{T}\gO_\ep$.
Replacing ``$\max$'' and ``$\leq$'' with ``$\min$'' and ``$\geq $'', respectively, in the above condition yields the right definition of viscosity supersolution.

Thanks to (H2), we can fix a constant $C_0>0$ so that
\[
|F(0,0,0,(x,y))|\leq C_0 \ \ \FOR (x,y)\in\ol{\gO_{\ep_0}}.
\]

\begin{proposition}\label{prop1} Assume that (H1)--(H4) hold. \begin{enumerate} 
\item 
The constant functions $\ga^{-1}C_0$ and $-\ga^{-1}C_0$ 
are classical super and sub solutions to \eqref{eq1}, respectively. 
\item  
There is a 
viscosity solution to \eqref{eq1}. 
\item  
There is a constant $\ep_0>0$ such that if $0<\ep\leq\ep_0$,  then any viscosity solution $u$ to \eqref{eq1} 
satisfies \ $\sup_{\ol{\gO_{\ep}}}|u|\leq \ga^{-1}C_0$. 
\end{enumerate}
\end{proposition}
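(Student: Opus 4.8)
Throughout set $M:=\ga^{-1}C_0>0$. The approach I would take is to prove the three assertions in turn; (1) and (2) are direct, and (3) carries the real content.

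\emph{Assertions (1) and (2).} For the constant function $M$ one has $D^2M=0$, $DM=0$ and $\nu_\ep\cdot DM=0$ on $\pl\gO_\ep$, so it is a classical supersolution of \eqref{eq1} as soon as $F(0,0,M,(x,y))\ge0$ on $\ol{\gO_\ep}$, and this is exactly what (H3) yields: $F(0,0,M,(x,y))\ge F(0,0,0,(x,y))+\ga M\ge-C_0+C_0=0$, using $\ol{\gO_\ep}\subseteq\ol{\gO_{\ep_0}}$ (valid for $\ep\le\ep_0$ since $g>0$) and the choice of $C_0$. Reversing all inequalities shows $-M$ is a classical subsolution, and in particular $F(0,0,-M,(x,y))\le0$ on $\ol{\gO_\ep}$. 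For (2) I would run Perron's method on the ordered pair $-M\le M$ from (1): set
\[
u(z):=\sup\big\{\,w(z)\ \big|\ w\ \text{is a bounded upper semicontinuous subsolution of }\eqref{eq1}\text{ with }-M\le w\le M\,\big\};
\]
then $-M\le u_*\le u^*\le M$, the supremum $u^*$ is a subsolution, and the usual bump construction makes $u_*$ a supersolution, so $u$ is a solution in the sense adopted here. The only points needing (routine) care are the oblique conditions \eqref{subL}, \eqref{subB}, \eqref{subT} and the corners of $\gO_\ep$; and, comparison not being assumed, one notes that the bump construction is never obstructed by the constant barriers $\pm M$, since a test function touching $u_*$ from below at a point where $u_*=M$ is necessarily critical and concave there, so (H2) and (1) force the supersolution inequality.

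\emph{Assertion (3).} Let $u$ be any viscosity solution, so $u^*$ is a subsolution and $u_*$ a supersolution. Since $u_*\le u\le u^*$, it suffices to prove $u^*\le M$, the bound $u_*\ge-M$ being entirely symmetric (minimize instead of maximize, reverse the boundary alternatives, and use $F(0,0,-M,\cdot)\le0$). The key step is to construct a single function $\chi\in C^2(\ol{\gO_\ep})$, $\chi\ge0$, whose co-normal derivative is strictly positive on every boundary portion:
\[
\nu_\mathrm{L}\cdot D\chi>0 \ \ \ON \pl_\mathrm{L}\gO_\ep,\qquad \nu_\mathrm{B}\cdot D\chi>0 \ \ \ON \pl_\mathrm{B}\gO_\ep,\qquad \nu_\mathrm{T}\cdot D\chi>0 \ \ \ON \pl_\mathrm{T}\gO_\ep.
\]
For the $x$--part, mollify the merely $C^1$ defining function $\rho$ of (H4): if $\rho_\eta$ is a $C^\infty$ mollification of $\rho$, then $D\rho_\eta\to D\rho$ uniformly on the compact set $\pl\gO$, so for $\eta$ small $D\rho(x)\cdot D\rho_\eta(x)>0$ there, and then $(x,y)\mapsto\rho_\eta(x)$ already has positive co-normal derivative on $\pl_\mathrm{L}\gO_\ep$ because $\nu_\mathrm{L}=(|D\rho(x)|^{-1}D\rho(x),0)$. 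Put $\chi(x,y):=\rho_\eta(x)+\gth(y)$ with $\gth(y):=-y+Ky^2$: on the bottom $\nu_\mathrm{B}\cdot D\chi=-\gth'(0)=1>0$, and on the top $\nu_\mathrm{T}\cdot D\chi=(1+\ep^2|Dg(x)|^2)^{-1/2}\big(\gth'(\ep g(x))-\ep\,Dg(x)\cdot D\rho_\eta(x)\big)>0$ once $K$ is taken large enough (depending on $\ep$), using $\inf_\gO g>0$ from (H1) and the boundedness of $Dg$ and $D\rho_\eta$. Adding a constant we arrange $\chi\ge0$, and $D\chi,D^2\chi$ are bounded on $\ol{\gO_\ep}$.

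With this $\chi$, for $\gd>0$ put $\psi_\gd:=M+\gd\chi$ and let $\hat z$ maximize $u^*-\psi_\gd$ over $\ol{\gO_\ep}$ (it exists since $u^*$ is upper semicontinuous on the compact $\ol{\gO_\ep}$). If $(u^*-\psi_\gd)(\hat z)\le0$ we are done for this $\gd$; otherwise $u^*(\hat z)>\psi_\gd(\hat z)\ge M$, and we test the subsolution $u^*$ against the $C^2$ function $\psi_\gd$ at $\hat z$. At a boundary point every alternative to the interior inequality reads $\gd\,\nu_\bullet\cdot D\chi(\hat z)\le0$, which fails by construction — even at a corner, where two such alternatives have to be ruled out at once, since $\chi$ is co-normal--positive on every portion simultaneously. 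Hence $F(\gd D^2\chi(\hat z),\gd D\chi(\hat z),u^*(\hat z),\hat z)\le0$. By (H3) this is $\ge F(\gd D^2\chi(\hat z),\gd D\chi(\hat z),M,\hat z)+\ga(u^*(\hat z)-M)$, and by continuity of $F$ together with the boundedness of $D\chi,D^2\chi$, $F(\gd D^2\chi(\hat z),\gd D\chi(\hat z),M,\hat z)\ge F(0,0,M,\hat z)-\go(\gd)\ge-\go(\gd)$ for a quantity $\go(\gd)\to0$ as $\gd\to0$ (using $F(0,0,M,\cdot)\ge0$ from (1)). Therefore $u^*(\hat z)-M\le\go(\gd)/\ga$, whence $\max_{\ol{\gO_\ep}}(u^*-\psi_\gd)\le\go(\gd)/\ga$ and so $u^*(z)\le M+\gd\,\sup_{\ol{\gO_\ep}}\chi+\go(\gd)/\ga$ for every $z$; letting $\gd\to0^+$ gives $u^*\le M$, which is assertion (3).

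\emph{Where the difficulty is.} Assertions (1) and (2) are routine. The real point is the corrector $\chi$ in (3): because $\pl\gO$ is only $C^1$ one cannot take $\chi$ to involve a $C^2$ defining function and must mollify, and because $\gO_\ep$ has corners the corrector must defeat two oblique boundary alternatives simultaneously, which dictates the additive splitting into an $x$--part transverse to $\pl_\mathrm{L}\gO_\ep$ and a $y$--part transverse at once to $\pl_\mathrm{B}\gO_\ep$ and $\pl_\mathrm{T}\gO_\ep$. The smallness of $\ep_0$ is used only to keep $C_0$ uniform over $\ep\in(0,\ep_0]$ via $\ol{\gO_\ep}\subseteq\ol{\gO_{\ep_0}}$.
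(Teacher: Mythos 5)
Your proof is correct and follows the same overall strategy as the paper: explicit verification for (1), Perron's method for (2), and for (3) a $C^2$ corrector with strictly positive co-normal derivative on every boundary portion of $\gO_\ep$, which is then used to perturb the constant barriers $\pm M$. The differences are only in the implementation of the corrector and the closing estimate. For the corrector, the paper sets
\[
\psi(x,y)=\gd\rho(x)+\ep\Big(\eta\Big(-\fr{y}{\ep}\Big)+\eta\Big(\fr{y-\ep g(x)}{\ep}\Big)\Big),
\]
where $\eta$ vanishes identically on $(-\infty,-g_0]$ with $g_0=\inf_\gO g$; this cut-off decouples the bottom and top conditions (the bottom co-normal derivative reduces to $\eta'(0)$), but the $x$-dependence in $\eta((y-\ep g(x))/\ep)$ contributes a term of size $\ep|Dg|$ to the lateral co-normal derivative, forcing $\ep_0$ to be small enough that $\gd\min_{\pl\gO}|D\rho|>\ep_0\max_{\pl\gO}|Dg|$, and the whole $C^1$ function $\psi$ is then mollified. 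Your additive splitting $\chi=\rho_\eta(x)+\theta(y)$ with $\theta(y)=-y+Ky^2$ removes the lateral coupling altogether, so smallness of $\ep_0$ is not needed for the corrector itself (only to keep $C_0$ uniform on $\ol{\gO_{\ep_0}}$), at the cost of $K$ growing like $1/\ep$; and only $\rho$ needs mollification since $\theta$ is already smooth. Finally, for the $L^\infty$ bound you argue directly from $F(\gd D^2\chi,\gd D\chi,M,\hat z)\geq F(0,0,M,\hat z)-\omega(\gd)\geq -\omega(\gd)$ and let $\gd\to 0^+$, whereas the paper inserts an extra shift $\gd$ in the barrier and derives a contradiction by sending the perturbation parameter $\gamma\to 0^+$; both routes are valid and give the same conclusion.
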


Henceforth, we will assume that $\ep_0$ is small enough so that if $0<\ep\leq \ep_0$, then 
$\sup_{\ol\gO_\ep}|u|\leq \ga^{-1}C_0$ for any solution $u$ to \eqref{eq1}.

\begin{proof} (1) Set $u(z)=\ga^{-1}C_0$ for $z\in\ol{\gO_\ep}$. It is clear that 
$\nu_\mathrm{L}\cdot D u(z)=0$ for $z\in\pl_\mathrm{L}\gO_\ep$, 
$\nu_\mathrm{B}\cdot Du(z)=0$ for $z\in\pl_\mathrm{B}\gO_\ep$, 
and $\nu_\mathrm{T}\cdot D\phi(z)=0$ for $z\in\pl_\mathrm{T}\gO_\ep$. 
It follows that 
\[\bald
F(D^2u(z),Du(z),u(z),z)&=F(0,0,u(z),z)
\geq F(0,0,0,z)+\ga u(z)\\&\geq -C_0+C_0=0 \ \ \FOR z\in\ol{\gO_\ep}. 
\eald\]
Thus, the constant function $\ga^{-1}C_0$ is a classical supersolution of \eqref{eq1}. 
Similarly, the constant function $-\ga^{-1}C_0$ is a classical subsolution of \eqref{eq1}.

(2) Thanks to (H2), the constant functions $\ga^{-1}C_0$ and
$-\ga^{-1}C_0$ are viscosity sub and super solutions of \eqref{eq1}, respectively. 
Hence, the Perron method readily yields a solution to \eqref{eq1}. Indeed, if we set 
\[\bald
u(z)=\sup\{v(z)\mid & v \text{ is a viscosity subsolution of \eqref{eq1},} 
\\& -\ga^{-1}C_0\leq v\leq \ga^{-1} C_0 \ \ON\ \ol{\gO_\ep}\} \ \ \FOR z\in\ol{\gO_\ep},
\eald
\] 
then the function $u$ is a viscosity solution to \eqref{eq1}. 

(3) We claim that if $\ep_0$ is sufficiently small, then there is a function 
$\psi\in C^2(\ol{\gO_\ep}) $ such that 
\beq\label{psi}
\nu\cdot D\psi(z)>0  \ \ 
\FOR \bcases
\nu=\nu_\mathrm{L} \AND z\in\pl_\mathrm{L}\gO_\ep, \\[3pt]
\nu=\nu_\mathrm{B} \AND z\in\pl_\mathrm{B}\gO_\ep,\\[3pt]
\nu=\nu_\mathrm{T} \AND z\in\pl_\mathrm{T}\gO_\ep.
\ecases
\eeq
We will come back to the proof of \eqref{psi} later and now assume the existence of such a $\psi$, and complete the proof of (3). 
Let $u$ be any viscosity solution of \eqref{eq1}. 
Let $v$ and $w$  be the upper and lower semicontinuous envelopes of $u$ on $\ol{\gO_\ep}$, respectively.  Fix any $\gd>0$. 
We prove by contradiction that $v\leq\ga^{-1}C_0$ on $\ol{\gO_\ep}$. Thus, we suppose that $\max_{\ol{\gO_\ep}} v>\ga^{-1}C_0$. Choosing positive constants $\gd$ and $\gamma$ small enough, we have 
$\max_{\ol{\gO_\ep}}(v-\gamma\psi)>\gd+\ga^{-1}C_0$. Set $\phi=\gamma\psi+\gd+\ga^{-1}C_0$ on $\ol{\gO_{\ep}}$. 
Let $\hat z\in\ol{\gO_\ep}$ be a maximum point of the function $v-\phi$. 
Noting that \eqref{psi} holds with $\phi$ in place of $\psi$, we find by the subsolution 
property of $v$ that
\[\bald
0&\geq F(D^2\phi(\hat z), D\phi(\hat z), v(\hat z),\hat z)
\\&\geq 
F(\gamma D^2\psi(\hat z),\gamma D\psi(\hat z), \gamma \psi(\hat z)+\ga^{-1}C_0,\hat z)+\ga \gd. 
\eald
\]
Sending $\gamma \to 0^+$, we obtain $F(0,0,\ga^{-1}C_0,\tilde z)+\ga\gd\leq 0$ 
for some $\tilde z\in\ol{\gO_\ep}$, which contradicts that $\ga^{-1}C_0$ is a 
classical supersolution of \eqref{eq1}. Hence, we conclude that $u\leq v\leq \ga^{-1}C_0$ on $\ol{\gO_\ep}$. A parallel argument ensures that $u\geq w\geq -\ga^{-1}C_0$ 
on $\ol{\gO_\ep}$. 

It remains to prove that there is $\ep_0>0$ such that for each $\ep\in(0,\ep_0]$, there exists $\psi=\psi_\ep\in C^2(\ol{\gO_\ep})$ for which \eqref{psi} holds. 
Fix any $\ep>0$.  Let $\rho\in C^1(\R^{N})$ be a function satisfying \eqref{rho}. 
Choose a function $\eta\in C^1(\R)$ such that 
\[
\eta(r)=0 \ \ \FOR r\leq -g_0 \ \ \AND \ \ 0<\eta'(r) \leq 1 \ \ \FOR r>-g_0,
\]
where $g_0:=\inf_{\gO}g>0$.  
For a positive constant $\gd$, we define $\psi=\psi_\ep$ 
on $\R^{N+1}$ by setting 
\[
\psi(x,y)=\gd\rho(x)+\ep\Big(\eta\Big(-\fr{y}{\ep}\Big)+\eta\Big(\fr{y-\ep g(x)}{\ep}\Big)\Big) \ \ \FOR (x,y)\in\R^N\tim\R. 
\] 
If $z=(x,y)\in \pl_\mathrm{L}\gO_\ep$, then 
\[\bald
\nu_\mathrm{L}\cdot D\psi(z)&=
(|D\rho|^{-1}D\rho,0) \cdot \Big(\gd (D\rho,0)- \eta'\Big(-\fr{y}{\ep}\Big)e_{N+1} 
\\&\quad +\eta'\Big(\fr{y-\ep g(x)}{\ep}\Big)(-\ep Dg(x),1)\Big)
%\\& 
\geq \gd|D\rho(x)| -\ep_0 |Dg(x)|.
\eald
\]
Similarly, if $z=(x,0)\in\pl_\mathrm{B}\gO_\ep$, then 
\[
\nu_\mathrm{B}\cdot D\psi(z)=-e_{N+1}\cdot D\psi(z)
=\eta'(0)-\eta'(-g(x))= \eta'(0),
\] 
and if $z=(x,y)\in\pl_\mathrm{T}\gO_\ep$, then
\[\bald
\nu_\mathrm{T}\cdot D\psi(z)&\geq \fr{1}{\sqrt{\ep^2|Dg|^2+1}}\Big(-\gd\ep |D\rho||Dg| 
-\eta'(-g(x)) +\eta'(0)(\ep^2 |Dg(x)|^2+1)) \Big)
\\&\geq \fr{1}{\sqrt{\ep^2|Dg|^2+1}}\Big(-\gd\ep |D\rho||Dg| 
+\eta'(0)(\ep^2 |Dg(x)|^2+1)) \Big)
\\&\geq -\gd|D\rho(x)|+\eta'(0). 
\eald
\] 
We select $\gd>0$ and $\ep_0>0$ sufficiently small so that 
\[
\eta'(0)>\gd\max_{\ol\gO}|D\rho| \ \ \AND \ \ \gd \min_{\pl\gO}|D\rho| >\ep_0\max_{\pl\gO}|Dg|, 
\]
which assures that \eqref{psi} holds if $\ep\in(0,\ep_0]$. At this point, 
we only have the $C^1$--regularity of $\psi$, but the standard mollification 
procedure provides a $C^2$--function $\psi$ which satisfies \eqref{psi} 
as far as $\ep\in(0,\ep_0]$.  \end{proof}

\section{Convergence results}
\subsection{Relaxed limits}
Let $u^\ep$ be a solution of \eqref{eq1}. By the choice of $\ep_0$ (see also Proposition \ref{prop1}),  %comparison principle,  let $M=\|f(x,y)\|_{L^\infty(\gO_{\ep})}$ 
we have
\[
\|u^\ep\|_\infty\leq \frac{C_0}{\alpha}.
\]
This allows us to define the upper and lower relaxed limits $u^+$ and $u^-$ of $\{u^\ep\}_{\ep\in(0,\ep_0]}$:
\beq\label{u+-}\left\{\ \bald
& u^+(x)=\lim_{r\to 0^+} \sup\{u^\ep(\xi,\eta)\mid (\xi,\eta)\in\ol\gO_\ep, |\xi-x|<r, 0<\ep<r \},
\\&u^-(x)=\lim_{r\to 0^+}\inf\{u^\ep(\xi,\eta)\mid (\xi,\eta)\in\ol\gO_\ep, |\xi-x|<r, 
0<\ep<r\}. 
\eald\right.
\eeq
It follows that $u^+\geq u^-$ on $\overline\Omega$ and $u^+,-u^-\in\USC(\overline\Omega)$. 
The limit equation will be
\beq %\tag{2} 
\label{eq2-2}\left\{\ \bald
&F\left(\begin{pmatrix}D^2 w&0 \\ 0& D g\cdot Dw/g \end{pmatrix}, (D w,0),w,(x,0)\right)=0 %f(x,0)
\ \ \IN \Omega, %\ \ \AND \ \ 
\\&\fr {\pl w}{\pl \nu}=0 \ \ \ON \ \pl\gO.
\eald \right.
\eeq

%On the top boundary $\pl_\mathrm{T}\gO_\ep$, we have
%\[
%\nu_\ep=\nu_\mathrm{T}=\fr{(-\ep Dg(x),1)}{\sqrt{\ep^2|D g(x)|^2+1}},
%\]%
%and so
%\[
%\fr{\pl u^\ep}{\pl\nu}\lesseqqgtr 0 \ 
%\iff \ -\ep D g(x) \cdot D_xu^\ep(x,\ep g(x))+u_y^\ep(x, \ep g(x))\lesseqqgtr 0.
%\]
 
 \begin{theorem}\label{prop2} Suppose that (H1)--(H4) hold. 
%$F$ is a proper operator, satisfying (H1) and (H2). %Suppose $f$ to be continuous in $\overline\gO_\ep$.
The functions $u^+$ and $u^-$ are,  respectively, sub and super solutions of \eqref{eq2-2}. 
\end{theorem}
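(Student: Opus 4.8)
The plan is to run the classical half-relaxed limits scheme of Barles--Perthame. It reduces the assertion to a test-function argument in which one builds, from a test function $\phi$ for the limit problem \eqref{eq2-2}, a perturbed test function for $u^\ep$ on $\ol{\gO_\ep}$ that carries the one-dimensional ansatz of the Introduction. I describe the subsolution property of $u^+$; the supersolution property of $u^-$ is obtained symmetrically, all inequalities reversed. Recall that $u^+,-u^-$ are bounded and belong to $\USC(\ol\gO)$.

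Let $\phi\in C^2(\ol\gO)$ and let $\hat x\in\ol\gO$ be a maximum point of $u^+-\phi$ on $\ol\gO$; replacing $\phi(x)$ by $\phi(x)+|x-\hat x|^4$, which alters neither $D\phi(\hat x)$ nor $D^2\phi(\hat x)$, I may assume the maximum is \emph{strict}. Set $\bar a:=Dg(\hat x)\cdot D\phi(\hat x)/g(\hat x)$, which is finite since $g(\hat x)\ge\inf_\gO g>0$ by (H1), fix a small $\gs>0$, and define on $\ol{\gO_\ep}$
\[
\Phi_\ep(x,y)=\phi(x)+\tfrac12(\bar a+\gs)\,y^2-\ep^2 y .
\]
Then $\Phi_\ep\in C^2(\ol{\gO_\ep})$, $D\Phi_\ep=(D\phi(x),(\bar a+\gs)y-\ep^2)$, and $D^2\Phi_\ep=\begin{pmatrix}D^2\phi(x)&0\\0&\bar a+\gs\end{pmatrix}$; this already has the block structure of \eqref{eq2-2}, and the lower-right entry tends to $\bar a$ as $\gs\to0$. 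Moreover $\nu_\mathrm B\cdot D\Phi_\ep=\ep^2>0$ on $\pl_\mathrm B\gO_\ep$, while a direct computation gives $\nu_\mathrm T\cdot D\Phi_\ep=(1+\ep^2|Dg|^2)^{-1/2}\,\ep\big(-Dg(x)\cdot D\phi(x)+(\bar a+\gs)g(x)-\ep\big)$, whose bracket equals $\gs g(\hat x)>0$ at $(x,\ep)=(\hat x,0)$ and hence is $\ge\tfrac12\gs\inf_\gO g>0$ for $x$ in a fixed neighbourhood $U$ of $\hat x$ and $\ep$ small; so $\nu_\mathrm T\cdot D\Phi_\ep>0$ on the part of $\pl_\mathrm T\gO_\ep$ lying over $U$.

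Now let $z_\ep=(x_\ep,y_\ep)$ maximize $v^\ep-\Phi_\ep$ over the compact set $\ol{\gO_\ep}$, where $v^\ep$ denotes the upper semicontinuous envelope of $u^\ep$, a subsolution of \eqref{eq1} which still has $u^+$ as its upper relaxed limit. Since $0\le y_\ep\le\ep\sup_\gO g\to0$ and $\hat x$ is a strict maximum of the USC function $u^+-\phi$, the standard half-relaxed-limit argument gives, along a subsequence, $x_\ep\to\hat x$, $y_\ep\to0$ and $v^\ep(z_\ep)\to u^+(\hat x)$; in particular $x_\ep\in U$, and $x_\ep\in\gO$ whenever $\hat x\in\gO$, for $\ep$ small. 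Writing the viscosity subsolution condition at $z_\ep$ with the test function $\Phi_\ep$ and invoking the sign computations above, the alternatives \eqref{subB} and \eqref{subT} (and, at the two kinds of corners, the $\mathrm B$- and $\mathrm T$-alternatives) are excluded; hence either \eqref{sub1} holds at $z_\ep$, or $\hat x\in\pl\gO$ and \eqref{subL} holds at $z_\ep$. Passing to a further subsequence, one of these occurs for all $\ep$. In the first case, letting $\ep\to0$ in $F(D^2\Phi_\ep(z_\ep),D\Phi_\ep(z_\ep),v^\ep(z_\ep),z_\ep)\le0$, using (H2) and the convergences $D^2\Phi_\ep(z_\ep)\to\begin{pmatrix}D^2\phi(\hat x)&0\\0&\bar a+\gs\end{pmatrix}$, $D\Phi_\ep(z_\ep)\to(D\phi(\hat x),0)$, $z_\ep\to(\hat x,0)$, and then letting $\gs\to0^+$, yields the PDE inequality of \eqref{eq2-2} at $\hat x$. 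In the second case, $\nu_\mathrm L\cdot D\Phi_\ep(z_\ep)=|D\rho(x_\ep)|^{-1}D\rho(x_\ep)\cdot D\phi(x_\ep)\le0$, and $\ep\to0$ gives $\pl\phi/\pl\nu(\hat x)\le0$. Either way the subsolution condition for \eqref{eq2-2} holds at $\hat x$. For $u^-$ one repeats this with $\phi$ having a strict minimum at $\hat x$, the test function $\phi(x)+\tfrac12(\bar a-\gs)y^2+\ep^2 y$ — so that now $\nu_\mathrm B\cdot D\Phi_\ep<0$ on $\pl_\mathrm B\gO_\ep$ and $\nu_\mathrm T\cdot D\Phi_\ep<0$ over a neighbourhood of $\hat x$ — and all inequalities reversed.

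The step needing care is the sign of $\nu_\mathrm T\cdot D\Phi_\ep$ near $\hat x$: with only the natural corrector $\tfrac12\bar a y^2$ this quantity is $o(\ep)$ with uncontrolled sign — unlike in the formal computation, no rate for $x_\ep\to\hat x$ is at hand — which is precisely why the genuine $O(1)$ margin furnished by the extra term $\tfrac12\gs y^2$ is needed, and why one must check that the $O(\ep)$ perturbation produced by the boundary-layer term $\mp\ep^2 y$ (present only to correct the sign on $\pl_\mathrm B\gO_\ep$) does not swamp it. The rest — the localization $z_\ep\to(\hat x,0)$, where strictness of the maximum enters, and the enumeration of the corner alternatives on $\pl\gO_\ep$ — is routine.
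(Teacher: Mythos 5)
Your proof is correct and follows the same Evans-type perturbed test function strategy as the paper: one augments the limit test function $\phi(x)$ by a quadratic corrector $\tfrac12 c\,y^2$ whose coefficient encodes $Dg\cdot D\phi/g$ plus a small margin, and by an $o(1)$ boundary-layer term, designed so that at a maximum of $v^\ep-\Phi_\ep$ the Neumann alternatives on $\pl_\mathrm B\gO_\ep$ and $\pl_\mathrm T\gO_\ep$ are impossible, forcing either the interior inequality or the lateral Neumann inequality, which then pass to the limit $\ep\to0^+$ and $\gs\to0^+$.

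The implementation differs in two details, both of which simplify the argument. The paper replaces your constant $\bar a=Dg(\hat x)\cdot D\phi(\hat x)/g(\hat x)$ by a $C^2$ function $h_\gd(x)$ that approximates $Dg\cdot D\phi/g$ uniformly on $\ol\gO$ to within $\gd$, so the sign of $\nu_\mathrm T\cdot D\Phi$ is controlled over all of $\pl_\mathrm T\gO_\ep$ rather than only above a neighbourhood of $\hat x$; and the paper's boundary-layer corrector is $\gamma\ep^2\gz(y/\ep)$ with a single bounded profile $\gz$ satisfying $\gz'(0)<0<\gz'(g)$, which wins on both bottom and top simultaneously. Your choice of the constant $\bar a$ plus the $\gs$-margin, together with the linear corrector $-\ep^2 y$, is a genuine simplification that works precisely because of the localization $z_\ep\to(\hat x,0)$ furnished by the strict maximum — the same localization the paper uses in Remark~\ref{rem3} — and it has the pleasant consequence that $D^2\Phi_\ep$ is exactly block-diagonal, whereas in the paper's version an $O(\bar y)$ off-diagonal term $\bar y\,Dh_\gd(\bar x)$ appears and must be dropped in the limit. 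Your closing discussion accurately identifies the one delicate point: the $O(1)$ margin from $\tfrac12\gs y^2$ must dominate the $O(\ep)$ perturbation that $-\ep^2 y$ creates in $\nu_\mathrm T\cdot D\Phi_\ep$, and the localization to a neighbourhood of $\hat x$ must be made before that comparison can be done. I find no gap.
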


\begin{comment}
\begin{remark} \label{rem1} Before giving the proof of Theorem %Proposition 
\ref{prop2}, let us remark that in the special case of the extremal Pucci's operators, where e.g. the equation \eqref{eq1} is given by
\beq\label{rem1.1}
-\Mpl(D^2u^\ep(x,y))+\alpha u^\ep(x,y)=f(x,y) \ \IN \gO_{\ep} \ \ \AND \ \ 
\fr {\pl u^\ep}{\pl \nu_\ep} %\nu}
=0 \ \ \ON \pl\gO_\ep,
\eeq
where $f\in C\ol{\gO_{\ep_0}})$ is a given function. 
The limit equation becomes
\beq\label{rem1.2}
-\Mpl(D^2u(x))-\Lambda \left(\frac{D g\cdot Du}{g}\right)^++\lambda \left(\frac{D g\cdot Du}{g}\right)^-+\alpha u(x)=f(x,0)
 \ \IN \Omega.
 \eeq
Of course in the first equation $\Mpl$ acts on matrices in $\cS(N+1)$ while, in the second equation, it acts on matrices in $\cS(N)$.
\end{remark}
\end{comment}
\begin{proof} We treat only the subsolution property. By replacing $u^\ep$ by its 
upper semicontinuous envelope, we may assume that $u^\ep$ is upper semicontinuous 
on $\ol{\gO_\ep}$. 
Let $\phi\in C^2(\overline\Omega)$ 
and assume that for some $\hat x\in\overline\Omega$, 
\[
(u^+-\phi)(x)<(u^+-\phi)(\hat x) \ \ \IF \ x\not=\hat x. 
\]

In the following computation, we fix $\gd>0$ arbitrarily.  
We choose $h_\gd\in C^2(\overline\Omega)$ so that 
\[
\left|\left(\fr{Dg\cdot D\phi}{g}\right)(x)-h_\gd(x)\right|<\gd \ \ \FOR x\in \overline\Omega.
\]

We set 
\[
\psi_\gd^\pm(x,y)=\fr{1}{2}y^2\left(\pm 2\gd+h_\gd(x)\right),
\]
and consider the function 
\[
\Phi(x,y)=\phi(x)+\psi_\gd^+(x,y)+\gamma \ep^2 \gz(y/\ep),
\]
where $\gz\in C^2(\R)$ is a bounded function on $\R$ having the properties 
\[-1<\gz'(0)<0<\gz'(y)<1 \ \FOR y\geq \min g \ \ \AND
\ \  |\gz''(y)|<1\ \FOR y\in [0,\max g], 
\]
and $\gamma>0$.

We choose a maximum point $(\bar x,\bar y)=(\bar x(\ep,\gamma),
\bar y(\ep,\gamma))$ of the function $u^\ep-\Phi$ on $\ol\gO_\ep$. 
We are to take the limit $\ep\to 0^+$. In our limit process as $\ep \to 0^+$, the choice 
of $\gamma$ depends on $\ep$ in such a way that $\lim \gamma/\ep=0$. 
A possible choice is $\gamma=\ep^2$.    
%We may assume, after passing to a subsequence $\ep_j$, 
It is a standard observation (see Remark \ref{rem3} below) that as $\ep\to 0^+$, 
\beq \label{p2.1} %\tag{$\star$}
(\bar x, \bar y) \to (\hat x, 0) \ \ \AND \ \ u^\ep(\bar x,\bar y) \to u^+(\hat x).
\eeq

Since $u^\ep$ is a subsolution of (1), if 
\[\tag{i}
(\bar x,\bar y)\in\gO_\ep,
\] then we have 
\beq %\tag{3} 
\label{p2.2}
F(D^2 \Phi(\bar x,\bar y), D \Phi (\bar x,\bar y), u^\ep(\bar x,\bar y), (\bar x,\bar y) ) \leq 0; % f(\bar x,\bar y);
\eeq
if 
\[\tag{ii} (\bar x,\bar y)\in\pl_\mathrm{T}\gO_\ep\stm \pl_\mathrm{L}\gO_\ep, 
%\bar x\IN \Omega \ \ \AND \ \ \bar y=\ep g(\bar x),
\] 
then we have either \eqref{p2.2} or 
\beq \label{p2.3} %\tag{4}
-\ep Dg(\bar x)\cdot D_x\Phi(\bar x, \ep g(\bar x))+\Phi_y(\bar x,\ep g(\bar x))\leq 0;
\eeq
if 
\[\tag{iii} (\bar x,\bar y)\in\pl_\mathrm{B}\gO_\ep\stm\pl_\mathrm{L}\gO_\ep, 
%x\IN \Omega \ \ \AND \ \ \bar y=0,
\]
then we have either \eqref{p2.2} or 
\beq\label{p2.4}  %\tag{5}
-\Phi_y(\bar x, 0) \leq 0; 
%-\fr {\pl {\Phi}(\bar x, 0)}{\pl y}\leq 0;
\eeq
if 
\[\tag{iv} (\bar x,\bar y)\in\pl_\mathrm{L}\gO_\ep \stm(\pl_\mathrm{T}\gO_\ep \cup\pl_\mathrm{B}\gO_\ep),
%\bar x\in \partial\Omega, \  \ 0<\bar y<\ep g(\bar x), 
\]
then we have either \eqref{p2.2} or
\beq \label{p2.5}  %\tag{6}
\fr {\pl \Phi(\bar x,\bar y) }{\pl \nu_\ep}
=\nu_\mathrm{L}\cdot D\Phi(\bar x,\bar y)\leq 0;
\eeq
if 
\[\tag{v} (\bar x,\bar y)\in\pl_\mathrm{T}\gO_\ep\cap \pl_\mathrm{L}\gO_\ep,
%\bar x\in \partial\Omega, \  \ \bar y=\ep g(\bar x), 
\]
then we have either \eqref{p2.2}, \eqref{p2.3}, or \eqref{p2.5};  %with $\bar y=\ep g(\bar x)$; 
%\[\tag{6}
%\fr {\pl \Phi(\bar x,\ep g(\bar x)) }{\pl \nu}\leq0;
%\]
if 
\[\tag{vi} (\bar x,\bar y)\in\pl_\mathrm{B}\gO_\ep\cap\pl_\mathrm{L}\gO_\ep,
%\bar x\in \partial\Omega, \  \ \bar y=0, 
\]
then we have either \eqref{p2.2}, \eqref{p2.4}, or \eqref{p2.5}. % with $\bar y=0$.
% and  
%if
%\[\tag{vi}
%\bar x\in \partial\Omega, \  \ 0<\bar y<\ep g(\bar x), 
%\]
%then we have either \eqref{p2.2} or \eqref{p2.5}.

Observe that
\[\bald
D_x \Phi&=D \phi(x)+\fr{y^2}{2}D h_\gd(x),\quad
 &\Phi_y&=y\left(2\gd+h_\gd(x)\right)+\ep\gamma\gz'\left(\fr{y}{\ep}\right),
\\ D_{x}^2 \Phi &=D^2\phi(x)+\fr{y^2}2 D^2h_\gd(x),
\quad &\Phi_{yy}&=2\gd+h_\gd(x)+\gamma\gz''\left(\fr y{\ep}\right),
\\ \Phi_{x_iy}&=\Phi_{yx_i}= y\, (h_{\gd})_{x_i}(x).
\eald 
\]

 Inequalities \eqref{p2.2}, \eqref{p2.3}, \eqref{p2.4}, and \eqref{p2.5},  can be written, respectively, as
\beq \label{p2.6}%\tag{3-1}
F(\bar{X},\bar{p},u^\ep(\bar x,\bar y),(\bar x,\bar y))\leq 0, 
%F(D^2 \Phi(\bar x,\bar y), D \Phi (\bar x,\bar y), u^\ep(\bar x,\bar y), (\bar x,\bar y) ) \leq 0, %f(\bar x,\bar y)
\eeq
where 
$$
%D^2 \Phi(\bar x,\bar y)
\bar{X}=\begin{pmatrix}   \disp D^2\phi(\bar x)+\fr{\bar y^2}2 D^2h_\gd( \bar x)  & \bar y Dh_\gd(\bar x) \\ \bar y D h_\gd(\bar x)^T  &\disp  2\gd+h_\gd(\bar x)+\gamma\gz''\left(\fr {\bar  y}{\ep}\right) \end{pmatrix},
$$
and \ $\disp %D \Phi (\bar x,\bar y)
\bar{p}=( D\phi(\bar x)+\fr{\bar y^2}{2}Dh_\gd(\bar x), {\bar y}\left(2\gd+h_\gd(\bar x)\right)+\ep\gamma\gz'\left(\fr{ \bar y}{\ep}\right) ), $

\beq \label{p2.7}%\tag{4-1}
\bald
-D g(\bar x)\cdot&\left(D\phi(\bar x)+\fr{\ep^2 g(\bar x)^2}{2}Dh_\gd(\bar x)\right)
\\& +g(\bar x)\left(2\gd+h_\gd(\bar x)\right) +\gamma\gz'(g(\bar x))\leq 0,
\eald
\eeq
\beq \label{p2.8}  %\tag{5-1}
\gz'\left(0\right)\geq 0,
\eeq
\beq \label{p2.9} %\tag{6-1}
\fr {\pl \Phi(\bar x,\bar y) }{\pl \nu_\mathrm{L}}= (|D\rho|^{-1}D\rho(\bar x),0)\cdot  D \Phi(\bar x,\bar y) \leq 0. %\nu \leq0.
\eeq
%\[\tag{6-1}
%\fr {\pl \phi(\bar x) }{\pl \nu}+\fr{\bar y^2}{2}\fr {\pl h_\gd'(\bar x)}{\pl \nu}\geq 0.
%\]

Choosing $\ep>0$ small enough, we may assume that
\[
\gd\geq \fr{\ep^2 g(\bar x)}{2}Dg(\bar x)\cdot Dh_\gd(\bar x). 
\]
If \eqref{p2.7} holds, then we have
\[\bald
0&\geq -Dg(\bar x)\cdot\Big(D\phi(\bar x) 
 + \fr{\ep^2 g(\bar x)^2}{2}Dh_\gd(\bar x)\Big)
\\&\quad 
+g(\bar x)\left(\gd+\left(\fr{Dg\cdot D \phi}{g}\right)(\bar x)\right)
%\\&\quad 
+\gamma\gz'(g(\bar x))
%\\&\geq g(\bar x)\left(\gd- \fr{\ep^2 g(\bar x)}{2}D g(\bar x)\cdot Dh_\gd(\bar x)\right)+\gamma\gz'(g(\bar x))
\\&\geq \gamma \gz'(g(\bar x)). 
\eald
\]
This contradicts our choice of $\gz$, and also \eqref{p2.8} is a contradiction. 

Thus, we have \eqref{p2.6} in the case when either (i), (ii), or (iii) is valid, and
we have either \eqref{p2.6} or \eqref{p2.9} in the cases when either (iv), (v), or (vi) holds.

Sending $\ep\to 0^+$, we have 
\[
%D^2 \Phi(\bar x,\bar y)  
\bar{X} \to
\begin{pmatrix}   \disp D^2\phi(\hat  x)  & 0 \\ 0 &\disp  2\gd+h_\gd(\hat x) \end{pmatrix}  \leq \begin{pmatrix}   \disp D^2\phi(\hat  x)  & 0 \\ 0 &\disp  3\gd+\fr{Dg(\hat x) \cdot D\phi(\hat x)}{g(\hat x)} \end{pmatrix} 
\]
and 
\[
\disp \bar p %D \Phi (\bar x,\bar y) 
\to ( D \phi(\hat x), 0).
\]
Therefore, we see that if $\hat x\in \Omega$, then we have
\beq \label{p2.10} %\tag{3-2}
F\left(\begin{pmatrix}   \disp D^2\phi(\hat  x)  & 0 \\ 0 &\disp  3\gd+\fr{D g(\hat x)D\phi(\hat x)}{g(\hat x)}\end{pmatrix},  ( D\phi(\hat x), 0), u^+(\hat x), (\hat x,0)  \right) \leq 0, %f(\hat x, 0)
\eeq 
if $\hat x\in\partial\Omega$, then we have either \eqref{p2.10} %(3-2) 
or 
\[
\nu_{\mathrm{L}}\cdot ( D \phi(\hat x), 0) = \fr{\pl \phi(\hat x)}{\pl \nu}\leq 0.
\]

This guarantees that $u^+$ is a subsolution of \eqref{eq2-2}. %(2). 

A remark on the proof of the supersolution property of $u^-$ is that, in this case, one should use the perturbed test function 
\[
\Phi(x,y)=\phi(x)+\psi_\gd^-(x,y)-\gamma \ep^2\gz\left(\fr y\ep\right). \qedhere
\]
\end{proof} 

\begin{remark} \label{rem3} For a general approach to the proof of \eqref{p2.1}, 
%($\star$), 
we may refer to
the User's guide \cite{CIL}.  
Here, for the reader's convenience, we give a straightforward proof of 
\eqref{p2.1}.  %($\star$). 
By the definition of $u^+(\hat x)$, we may choose $\{(\ep_j, x_j,y_j)\}_{j\in\N}$ so that 
\[
\ep_j \to 0^+,\quad (x_j,y_j)\in\ol{\gO_{\ep_j}},\quad 
|x_j-\hat x|<\fr 1j, \quad % 0\leq y_j\leq \ep_j g(x_j),\quad 
u^+(\hat x)<\fr 1j +u^{\ep_j}(x_j,y_j). 
\]
Since $\Phi$ depends on $\ep$, we write $\Phi_\ep$ for $\Phi$. 
Also, we write $(\bar x_j,\bar y_j)$ for $(\bar x,\bar y)$ with $\ep=\ep_j$. 
Thus, $(\bar x_j,\bar y_j)$ is a maximum point of $u^{\ep_j}-\Phi_{\ep_j}$, and 
we have 
\[
(u^{\ep_j}-\Phi_{\ep_j})(\bar x_j, \bar y_j) \geq (u^{\ep_j}-\Phi_{\ep_j})(x_j, y_j)
>-\fr 1j +u^+(\hat x) -\Phi_{\ep_j}(x_j,y_j). 
\]
We may assume after passing to a subsequence that for some $\tilde x\in\overline\Omega$ and 
$\tilde u\in\R$,
\[
\lim (\bar x_j, \bar y_j) = (\tilde x,0) \ \ \AND \ \ \lim u^{\ep_j}(\bar x_j, \bar y_j)=\tilde u. 
\] 
Since $(\ep_j,\bar x_j,\bar y_j) \to (0,\tilde x, 0)$, we see, by the definition of $u^+(\tilde x)$, that
\[
u^+(\tilde x)\geq \lim u^{\ep_j}(\bar x_j,\bar y_j)=\tilde u. 
\]
All the above together, we see in the limit as $j\to \infty$ that
\[
u^+(\tilde x)-\Phi_0(\tilde x,0) \geq \tilde u-\Phi_0(\tilde x,0)\geq u^+(\hat x)-\Phi_0(\hat x,0),
\]
where $\Phi_0(x,y):=\lim_{\ep\to 0^+}\Phi_\ep(x,y)=\phi(x)+\psi_\gd^-(x,y)$, that is, 
\[
(u^+-\phi)(\tilde x) \geq \tilde u-\phi(\tilde x)\geq (u^+-\phi)(\hat x),
\]
which shows that $\tilde x=\hat x$ and $\tilde u=u^+(\hat x)$. \hfill $\Box$
\end{remark}

\subsection{Uniform convergence}

Let $F$, $\gO$, and $g$ be as in the previous section. 
Define the function $G\in C(\cS(N)\tim\R^N\tim\R\tim\ol\gO,\R)$ by
\beq\label{eq2.0} 
G(X,p,r,x)=F\Big(\begin{pmatrix}X&0\\ 0& Dg(x)\cdot p/g(x) \end{pmatrix}, (p,0), r, (x,0)\Big). 
\eeq
Recall that the limit equation \eqref{eq2-2} for $u$ is stated as
\beq\label{eq2.1}
G(D^2u,Du,u,x)=0 \ \ \IN \gO \ \ \AND \ \ \fr{\pl u}{\pl\nu}=0 \ \ \ON \pl\gO. 
\eeq 

A convenient assumption for Theorem \ref{prop3} to draw a uniform convergence result is the validity of the comparison principle for \eqref{eq2.1}:   
\begin{enumerate}\item[(H5)]If $v$ and $w$ are viscosity sub and super solutions to \eqref{eq2.1}, respectively, then\ $v\leq w$ \ on $\ol\gO$.  
\end{enumerate}

Indeed, we have 
\begin{theorem}\label{prop3} Assume (H1)--(H5). Let $u^\ep$ be a 
viscosity solution to \eqref{eq1} for $\ep\in(0,\ep_0]$. Then, for the unique 
continuous viscosity solution $u^0$ of \eqref{eq2.1}, we have 
\beq\label{eq2.2}
\lim_{\ep\to 0^+}\max_{(x,y)\in\ol{\gO_\ep}}|u^\ep(x,y)-u^0(x)|=0. 
\eeq
\end{theorem}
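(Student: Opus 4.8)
The plan is to run the standard half-relaxed limits method (see \cite{CIL}), using Theorem \ref{prop2} to identify the limit problem and the comparison principle (H5) to pin the two half-limits together. First I would record that, by the choice of $\ep_0$ together with Proposition \ref{prop1}, one has the uniform bound $\|u^\ep\|_\infty\le C_0/\ga$ for every $\ep\in(0,\ep_0]$; this is exactly what makes the half-relaxed limits $u^+$ and $u^-$ of \eqref{u+-} well defined, bounded, with $u^+\in\USC(\ol\gO)$, $u^-\in\LSC(\ol\gO)$, and $u^-\le u^+$ on $\ol\gO$ (all observed already right after \eqref{u+-}).

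The heart of the matter is Theorem \ref{prop2}: it gives that $u^+$ is a viscosity subsolution and $u^-$ a viscosity supersolution of \eqref{eq2-2}, that is, of \eqref{eq2.1} with $G$ as in \eqref{eq2.0}, the Neumann condition being understood in the viscosity sense. I would note that $G$ inherits (H2) and (H3) from $F$ (in particular $|G(0,0,0,x)|=|F(0,0,0,(x,0))|\le C_0$ on $\ol\gO$), so the constants $\pm C_0/\ga$ are respectively a super- and a subsolution of \eqref{eq2.1}, and the Perron method (as in Proposition \ref{prop1}(2)) produces a viscosity solution, which (H5) then renders continuous and unique; this justifies speaking of ``the unique continuous viscosity solution $u^0$''. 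The route I would actually take, however, is simply to apply (H5) to the pair $(u^+,u^-)$, obtaining $u^+\le u^-$ on $\ol\gO$; combined with $u^-\le u^+$ this forces $u^+=u^-$ on $\ol\gO$, so this common function is continuous and is simultaneously a sub- and a supersolution, hence a viscosity solution, of \eqref{eq2.1}; by uniqueness it coincides with $u^0$. Thus $u^+=u^-=u^0\in C(\ol\gO)$ — note that existence of $u^0$ drops out of this step for free.

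Finally I would deduce \eqref{eq2.2} by a routine compactness/contradiction argument. If \eqref{eq2.2} failed there would be $\gd>0$ and sequences $\ep_j\to0^+$, $(x_j,y_j)\in\ol{\gO_{\ep_j}}$ with $|u^{\ep_j}(x_j,y_j)-u^0(x_j)|\ge\gd$. Since $\ol\gO$ is compact I may pass to a subsequence with $x_j\to\bar x\in\ol\gO$, and then $0\le y_j\le \ep_j\sup_{\gO}g\to0$, so $(\ep_j,x_j,y_j)\to(0,\bar x,0)$. The definition of the half-relaxed limits in \eqref{u+-} gives
\[
\limsup_{j\to\infty}u^{\ep_j}(x_j,y_j)\le u^+(\bar x)=u^0(\bar x)\ \AND\ \liminf_{j\to\infty}u^{\ep_j}(x_j,y_j)\ge u^-(\bar x)=u^0(\bar x),
\]
so $u^{\ep_j}(x_j,y_j)\to u^0(\bar x)$, while continuity of $u^0$ gives $u^0(x_j)\to u^0(\bar x)$; this contradicts $|u^{\ep_j}(x_j,y_j)-u^0(x_j)|\ge\gd$ and proves \eqref{eq2.2}.

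Since Theorem \ref{prop2} has already carried out the analytic work, I do not expect a serious obstacle here; the argument is essentially bookkeeping around (H5) and a compactness extraction. The steps most likely to hide a subtlety are (a) checking that the sub/supersolution notion (including the Neumann condition in the viscosity sense) delivered by Theorem \ref{prop2} is precisely the one for which (H5) is postulated, so that the comparison step is legitimate, and (b) verifying that $G$ enjoys the structural hypotheses under which (H5) and the Perron-type existence of $u^0$ are meaningful.
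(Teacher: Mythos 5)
Your proposal is correct and follows essentially the same route as the paper: half-relaxed limits, Theorem \ref{prop2} to obtain sub/supersolutions of \eqref{eq2.1}, (H5) to collapse $u^+=u^-=u^0$, and a standard passage from locally uniform pinching to the uniform estimate \eqref{eq2.2}. The only cosmetic difference is in the last step, where you argue by sequential compactness and contradiction while the paper uses a finite covering of $\ol\gO$; these are interchangeable.
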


\begin{proof} The following argument is standard in the asymptotic analysis based on the half-relaxed limits, but we here present it for the reader's convenience.  
Let $u^+$ and $u^-$ be the functions defined by \eqref{u+-}. By the definition, we have $u^-\leq u^+$ on $\ol\gO$ and $u^+,-u^-\in\USC(\ol\gO)$. Theorem \ref{prop2} ensures that $u^+$ and $u^-$ are viscosity 
sub and super solutions to \eqref{eq2.1}, respectively. Furthermore, (H5) assures that 
$u^+\leq u^-$ on $\ol\gO$. Hence, we see that $u^+=u^-$ on $\ol\gO$, which readily 
shows that $u^+=u^-$ is continuous on $\ol\gO$. Writing $u^0$ for $u^+=u^-$, 
we find that $u^0$ is a continuous viscosity solution to \eqref{eq2.1}.   

To check \eqref{eq2.2}, fix any $\gd>0$. By the definition of $u^+$, for any 
$x\in\ol\gO$, we select $r=r(\gd,x)>0$ so that 
\[
u^\ep(\xi,\eta)<u^0(x)+\gd \ \ \IF 0<\ep<r, \ (\xi,\eta)\in\ol{\gO_\ep}, \ \AND\ 
|\xi-x|<r.
\] 
Reselecting $r>0$ sufficiently smaller, we may assume that $u^0(x)<u^0(\xi)+\gd$ if $\xi\in\ol\gO$ 
and $|\xi-x|<r$.  Now, the above inequality can be stated as 
\beq\label{eq2.3}
u^\ep(\xi,\eta)<u^0(\xi)+2\gd \  \ \IF 0<\ep<r, \ (\xi,\eta)\in\ol{\gO_\ep}, \ \AND\ 
|\xi-x|<r. 
\eeq
Since $\ol\gO$ is compact, we can choose a finite number of balls, $B_1,\ldots,B_m$, 
which cover $\ol\gO$, such that for every $j\in\{1,\ldots,m\}$, if $x_j$ and $r_j$ denote, respectively,  the center and radius of $B_j$, then \eqref{eq2.3}, with $(x_j,r_j)$ in place of $(x,r)$, holds.   Setting $r_0=\min\{r_j\mid j=1,\ldots,m\}$, we find that  
\[
u^\ep(\xi,\eta)<u^0(\xi)+2\gd \  \ \FOR (\xi,\eta)\in\ol{\gO_\ep} \ \AND \ 
0<\ep<r_0.
\] 
An argument parallel to the above yields, after replacing $r_0>0$ by a smaller one if necessary,
\[
u^\ep(\xi,\eta)>u^0(\xi)-2\gd \  \ \FOR (\xi,\eta)\in\ol{\gO_\ep} \ \AND \ 
0<\ep<r_0,
\]    
which completes the proof of \eqref{eq2.2}. 
\end{proof}
Let us recall that there are a number of contests where the comparison principle (H5) holds. In particular, when dealing with Neumann boundary conditions, one can refer to the results of  Hitoshi Ishii \cite{Is91},  Guy Barles \cite{Ba93}  and Stefania Patrizi  \cite{P}.
%In particular we refer to the results of Guy Barles \cite{Ba93} and \cite{Ba99}, but also to the PhD thesis of Stefania Patrizi  \cite{P}:
%These are the conditions required:
%\begin{enumerate}
%\item There exists $0<\lambda<\Lambda$ such that for $x\in\Omega$ and $M,N \in S(N)$ with $N\geq 0$
%$$\lambda\rm{tr}N\leq F(x,M+N)-F(x,M)\leq \Lambda\rm{tr}N.$$
%\item There exists $C_1>0$ snd $\theta\in (\frac{1}{2},1]$ such that for all $x,y\in\overline\Omega$ and $X\in S(N)$
%$$|F(x,X)-F(y,X)|\leq C_1|x-y|^\theta|X|$$
%\item $g(x)\in C^{1,1}$
%\end{enumerate}
%Under these condition the comparison principle (H5) holds and Theorem  \ref{prop3} holds.

We consider here the general comparison principle given in \cite[Theorem 7.5]{CIL}. This leads us to assume,  further hypotheses on  the domain $\gO$ and the operator $G$.

On $\gO$, in addition to (H4),  we need the uniform exterior 
sphere condition, i.e. that there is a constant $r_0>0$ such that 
\beq\label{eq2.4}
B_{r_0}(x+r_0\nu(x)) \cap \gO =\emptyset \ \ \FOR x\in\pl\gO,
\eeq  
where $B_r(x)$ denotes the open ball $\{y\in\R^N\mid |y-x|<r\}$. 
On the function $G$ a  crucial and typical hypothesis  is the following: 
\beq \label{eq2.5} \left\{\begin{minipage}{0.85\textwidth}
There is a function $\omega:[0,\infty)\rightarrow [0,\infty)$ that satisfies $\omega(0^+)=0$ such that 
$$ G(Y,p,r, y)- G(X,p,r, x)\leq \omega(\gamma |x-y|^2+|x-y|(|p|+1))$$
whenever $\gamma>0$, $p\in\R^N$, $x, y \in \ol{\gO}$, $r\in\R$, and $X,Y\in\cS(N)$ satisfy  
\[
-3\gamma I_{2N} \leq \begin{pmatrix}X&0\\ 0&-Y \end{pmatrix} \leq 3\gamma\begin{pmatrix} I_{N} &-I_{N}\\ -I_{N}&I_{N} \end{pmatrix}.  
\] 
\end{minipage} \right.
\eeq
Here $I_m$ denotes the identity matrix of order $m$. 
We impose another continuity 
condition on $G$, which states: 
\beq \label{eq2.6}\left\{\begin{minipage}{0.85\textwidth}
There is a neighborhood $V$ of $\pl\gO$, relative to $\ol\gO$, such that 
$$ G(X,p,r, x)- G(Y,q,r, x)\leq \omega(\|X-Y\|+|p-q|)$$
for $X,Y\in\cS(N)$, $p,q\in\R^N$, $r\in\R$, and $x\in V$.
\end{minipage} \right. 
\eeq
Note that if (H3) holds, then  
\beq \label{eq2.7} 
\ga(r-s)\leq G(X,p,r,x)-G(X,p,s,x)
\eeq
for $ r\geq s$ and $(X,p,x)\in\cS(N)\tim\R^N\tim\ol\gO$. 

The next proposition is a direct consequence of \cite[Theorem 7.5]{CIL} and Theorem 
\ref{prop3}. 

\begin{proposition}\label{prop4} Assume (H1)--(H4) and \eqref{eq2.4}--\eqref{eq2.6}. 
Then (H5) is satisfied and the uniform convergence \eqref{eq2.2} as in Theorem 
\ref{prop3} is valid. 
\end{proposition}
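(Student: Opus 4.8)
The plan is to obtain (H5) by checking that the boundary value problem \eqref{eq2.1} meets all the hypotheses of the general comparison principle \cite[Theorem 7.5]{CIL}, and then to read off the uniform convergence \eqref{eq2.2} directly from Theorem \ref{prop3}, for which (H5) was the only assumption not yet verified once (H1)--(H4) and \eqref{eq2.4}--\eqref{eq2.6} are granted.

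First I would place \eqref{eq2.1} in the framework of \cite[Section 7]{CIL}. By \eqref{eq2.0} the function $G$ is continuous on $\cS(N)\tim\R^N\tim\R\tim\ol\gO$; it is degenerate elliptic because $X\mapsto\begin{pmatrix}X&0\\0& Dg(x)\cdot p/g(x)\end{pmatrix}$ is order preserving and $F$ is nonincreasing in its matrix argument by (H2); and it is proper since \eqref{eq2.7}, a consequence of (H3), gives $\ga(r-s)\le G(X,p,r,x)-G(X,p,s,x)$ for $r\ge s$, so in particular $G$ is (strictly) nondecreasing in $r$. The boundary condition is the conormal/Neumann operator $B(x,r,p)=\nu(x)\cdot p$, which is continuous and nondecreasing along $\nu$, hence admissible in \cite{CIL}.

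Next I would verify the two structural hypotheses of \cite[Theorem 7.5]{CIL}. The domain $\gO$ is bounded and $C^1$ by (H4) and satisfies the uniform exterior sphere condition by \eqref{eq2.4}; this is exactly what is needed to build the doubling-variable test function compatible with the Neumann condition and to absorb the boundary penalization. The interior structure condition required in the theorem on sums is precisely \eqref{eq2.5}: for $X,Y\in\cS(N)$ satisfying the displayed matrix inequality with parameter $3\gamma$ one has $G(Y,p,r,y)-G(X,p,r,x)\le\omega(\gamma|x-y|^2+|x-y|(|p|+1))$, which is the form of the key structure condition \cite[(3.14)]{CIL}. Finally the near-boundary continuity \eqref{eq2.6}, namely $G(X,p,r,x)-G(Y,q,r,x)\le\omega(\|X-Y\|+|p-q|)$ for $x$ in a neighborhood $V$ of $\pl\gO$, is the additional hypothesis used in \cite[Theorem 7.5]{CIL} to control the extra terms produced by flattening the boundary near $\pl\gO$. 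With all of these in place, \cite[Theorem 7.5]{CIL} gives $v\le w$ on $\ol\gO$ for any viscosity subsolution $v$ and supersolution $w$ of \eqref{eq2.1}, i.e. (H5) holds. (The weak notion of solution used here, via upper and lower semicontinuous envelopes, only makes the statement easier, since those envelopes are honest sub/supersolutions in the sense of \cite{CIL}.)

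Once (H5) is available, hypotheses (H1)--(H5) all hold, so Theorem \ref{prop3} applies word for word: it furnishes the unique continuous viscosity solution $u^0$ of \eqref{eq2.1}, obtained as $u^+=u^-$, and yields $\lim_{\ep\to0^+}\max_{(x,y)\in\ol{\gO_\ep}}|u^\ep(x,y)-u^0(x)|=0$, which is \eqref{eq2.2}. The one point requiring care is the bookkeeping in the previous paragraph — matching our hypotheses \eqref{eq2.4}--\eqref{eq2.6} to the exact statement and proof of \cite[Theorem 7.5]{CIL}, and in particular confirming that the Neumann term is handled there via the exterior sphere condition in the same way; the ellipticity/properness and the moduli of continuity are then routine, and no genuinely new estimate is needed.
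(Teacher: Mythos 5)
Your proposal is correct and follows exactly the route the paper indicates: the paper itself gives no written proof, declaring Proposition~\ref{prop4} a ``direct consequence of \cite[Theorem 7.5]{CIL} and Theorem~\ref{prop3}.'' You have simply made explicit the routine bookkeeping — checking properness via \eqref{eq2.7}, degenerate ellipticity from (H2), and matching \eqref{eq2.4}--\eqref{eq2.6} to the hypotheses of \cite[Theorem 7.5]{CIL} — which is precisely what the paper leaves implicit.
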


Before concluding our discussion, we present two important examples of equations to which  
Theorem \ref{prop3} applies, one is fully nonlinear and the other is linear but degenerate elliptic.

\begin{example} We apply Proposition \ref{prop4} to show the uniform convergence result for the solution of equation \eqref{I1}, involving the extremal Pucci operator as presented in the Introduction. 
The extremal Pucci operator $-\cM^+_{\gl,\gL}(X)$ 
has the property \eqref{eq2.5}. Indeed, the matrix inequality on the right-hand side of \eqref{eq2.5} 
implies that $X\leq Y$ and hence, $-\cM^+_{\gl,\gL}(Y)+\cM^+_{\gl,\gL}(X)\leq 0$. 
If the regularity of $g$ is strengthened so that $g\in C^{1,1}(\ol\gO)$, then both the functions 
\[
H(p,x)=\left(\fr{Dg(x)\cdot p}{g(x)}\right)^\pm 
\] 
satisfy
\[
|H(p,y)-H(p,x)|\leq C|x-y||p|
\]
for all $p\in\R^N$, $x,y\in\ol\gO$ and some constant $C>0$.  It is then obvious to see 
that 
the operator 
\[
G(X,p,r,x)=-\cM^+_{\gl,\gL}(X)-\gL\left(\fr{Dg(x)\cdot p}{g(x)}\right)^++\gl\left(\fr{Dg(x)\cdot p}{g(x)}\right)^-+\ga r-f(x,0),
\]
where $f\in C(\ol{\gO_{\ep_0}})$, satisfies \eqref{eq2.5}.   Thus, thanks to Proposition \ref{prop4}, we find that 
the uniform convergence \eqref{eq2.2} for the solution $u^\ep$ to \eqref{I1}, as in Theorem \ref{prop3}, holds, provided that 
$\ga>0$, (H1), $g\in C^{1,1}(\ol{\gO})$, $f\in C(\ol{\gO_{\ep_0}})$, (H4), and \eqref{eq2.4} are satisfied.  
Of course the case of the Laplacian is recovered just by considering $\lambda=\Lambda=1$.
\end{example}

 \begin{example} In  these examples we concentrate on simple degenerate elliptic equations in order to emphasize how the nature of the limit equation depends on the direction of the diffusion.
Let $u_\ep$ be the solution of 
$$
- \frac{\partial^2  u_\ep} {\partial y^2}+u_\ep=f(x,y) \mbox{ in }\ \Omega_\ep,  \qquad  \frac{\partial  u_\ep}{ \partial \nu_\ep} =0 \mbox{ on }\ \partial\Omega_\ep. 
$$

If $g\in C^{1,1}(\ol{\gO})$ and $f\in C(\ol{\gO_{\ep_0}})$, then we are under the hypothesis of Proposition \ref{prop4}, therefore $u_\ep$ converges uniformly to $u_o$ solution of a first order equation precisely:
$$-\frac{D g\cdot Du_o}{g}+u_o=f(x,0) \mbox{ in }  \Omega,  \qquad \frac{\partial u_o}{\partial \nu} =0 \mbox{ on }\ \partial\Omega.  $$ 

 Instead, if  $u_\ep$ is the solution of 
 $$
- \frac{\partial^2  u_\ep} {\partial x_1^2}  +u_\ep=f(x,y) \mbox{ in } \:  \Omega_\ep, \qquad   \frac{\partial  u_\ep}{ \partial \nu_\ep}  \mbox{ on }\ \partial\Omega_\ep 
 $$

it will converge to  $u_o$ solution of the second order equation
$$ - \frac{\partial^2  u_0} {\partial x_1^2} +u_o=f(x,0)   \mbox{  in } \Omega, \qquad  \frac{\partial u_o}{\partial \nu}=0 \mbox{ on }\ \partial\Omega. $$

 \end{example}

\bye